\newcommand{\G}{\mathcal{G}}
\newcommand{\R}{\mathcal{R}}
\renewcommand{\phi}{\varphi}
\newcommand{\Prob}[1]{\operatorname{Prob}{#1}}
\newtheorem{theorem}{Theorem}[section]
\newtheorem{lemma}[theorem]{Lemma}
\newtheorem{proposition}[theorem]{Proposition}
\newtheorem{corollary}[theorem]{Corollary}
\newtheorem*{maintheorem}{Theorem}
\theoremstyle{remark}
\newtheorem*{remark}{Remark}
\theoremstyle{definition}
\newtheorem{example}[theorem]{Example}
\newtheorem*{definition}{Definition}
\title{Non-strong ergodicity of canonical actions of the Thompson groups}
\author{Ryoya Arimoto}
\address{The Polytechnic University of Japan, 2-32-1 Ogawa-nishimachi, Kodaira-shi, Tokyo, 187-0035, Japan}
\email{r-arimoto@uitec.ac.jp}
\subjclass{Primary 37A40; Secondary 20F38, 20L05}
\keywords{Thompson group, strongly ergodic, topological full group}
\begin{document}

\begin{abstract}
We show that the canonical actions of the Thompson group $V$ and its generalizations on the Cantor set are not strongly ergodic.
This implies that the associated crossed product von Neumann algebras are not full.
This also yields a non-embedding result for the Thompson groups.
\end{abstract}

\maketitle

\section{Introduction}
In 1965, R. J. Thompson introduced the groups $F, T, V$ in his unpublished note and showed that they are finitely presented, and $T, V$ are simple.
Since then, these groups have attracted considerable attention and have been investigated extensively.

In 2017, U. Haagerup and K. K. Olesen showed that $T, V$ are not inner amenable (\cite{HaagerupOlesen}).
This property, non-inner amenability, is closely related to the fullness of their group von Neumann algebras, which is a strong form of non-amenability.
By the result of Effros (\cite{Effros}), if a discrete ICC group is not inner amenable, then its group von Neumann algebra does not have property $\Gamma$.
The absence of property $\Gamma$ is known to be equivalent to the fullness of a von Neumann algebra (\cite[Corollary 3.8]{Connes}).
Thus, non-inner amenability of a group is a sufficient condition for fullness of its group von Neumann algebra, and hence the group von Neumann algebras $L(T), L(V)$ of $T, V$ are full.
The result of Haagerup and Olesen is extended to the Higman--Thompson groups, one of the generalizations of the Thompson group, by E. Bashwinger and M. C. B. Zaremsky (\cite{BashwingerZaremsky}).
Note that the Thompson group $F$ is shown to be inner amenable, and in addition, its group von Neumann algebra is a McDuff factor (see \cite{Jolissaint}).

Since the Thompson groups can be viewed as subgroups of the group of homeomorphisms of the Cantor set $X_2:= \{ 1, 2 \}^{\mathbb{N}}$, they have the canonical action on the Cantor set.
With respect to the probability measure $\mu _2:= (\frac{1}{2}\delta _1 + \frac{1}{2} \delta _2)^{\mathbb{N}}$ on $X_2$, the action of $V$ on the Cantor set is non-singular, that is, for every $s \in V$, a Borel subset $A \subset X_2$ satisfies $\mu _2 (A) > 0$ if and only if $\mu _2 (s^{-1}A) > 0$.
A non-singular action gives rise to a crossed product von Neumann algebra.
Considering that the group von Neumann algebra $L(V)$ is full, it is natural to ask whether the crossed product von Neumann algebra $L^{\infty}(X_2, \mu _2) \bar{\rtimes} V$ is full or not.
The action $V \curvearrowright (X_2 , \mu _2)$ is ergodic.
Moreover, we will show that this action is non-amenable in the sense of Zimmer, using the fact that the Thompson group $V$ contains a non-amenable subgroup whose elements have Radon--Nikodym derivatives equal to $1$ except on a small subset of the Cantor set.
Thus, $L^{\infty}(X_2, \mu _2) \bar{\rtimes} V$ is a non-amenable factor.
Observe that the action $V \curvearrowright (X_2, \mu _2)$ is not free.
This causes difficulties, for instance, in that the crossed product von Neumann algebra $L^{\infty}(X_2) \bar{\rtimes} V$ differs from the von Neumann algebra associated with the orbit equivalence relation $L(\R _{V \curvearrowright X_2})$.

Regarding the question whether the crossed product $L^{\infty}(X_2) \bar{\rtimes} V$ is full or not, we obtain the following.
The same conclusion also holds for some generalizations of the Thompson group.

\begin{maintheorem}[see Proposition \ref{HigmanThompsonalgebra}]
Let $V$ be the Thompson group and $V \curvearrowright X_2$ be the canonical action.
Then the associated von Neumann algebra $L^{\infty}(X_2, \mu _2) \bar{\rtimes} V$ is a non-amenable, non-full factor of type $\mathrm{III}_{1/2}$.
\end{maintheorem}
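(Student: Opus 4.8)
\emph{Setup.} The theorem bundles four facts about $M:=L^\infty(X_2,\mu_2)\bar{\rtimes}V$: that it is a factor, that it is of type $\mathrm{III}_{1/2}$, that it is non-amenable, and that it is not full. The plan is to prove these in turn, the last being the heart of the matter and following from the failure of strong ergodicity of $V\curvearrowright(X_2,\mu_2)$. Two structural observations underpin everything. First, the orbit equivalence relation $\R_{V\curvearrowright X_2}$ coincides with the tail equivalence relation of the one-sided full $2$-shift, which is amenable, hyperfinite and ergodic. Second, every $g\in V$ has a finite \emph{depth} $d(g)$: it is implemented by a bijection between two finite prefix codes of length at most $d(g)$, so it only rewrites an initial block of bounded length and, in particular, its Radon--Nikodym derivative is bounded by $2^{d(g)}$.

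\emph{Non-strong ergodicity.} For $x\in X_2$ put $S_n(x):=\#\{1\le i\le n:x_i=1\}$. The depth bound gives the uniform displacement estimate $|S_n(gx)-S_n(x)|\le 2d(g)$ for every $x$ and every $n\ge d(g)$. Set $A_n:=\{x\in X_2:S_n(x)>n/2\}$. By symmetry $\mu_2(A_n)\to\tfrac{1}{2}$, and the peak bound for the binomial distribution gives $\mu_2(\{|S_n-n/2|\le C\})=O(C/\sqrt n)$ for each fixed $C$. Applying the displacement estimate to $g$ and $g^{-1}$ yields $gA_n\triangle A_n\subseteq\{n/2-2d(g)<S_n\le n/2+2d(g)\}$, so $\mu_2(gA_n\triangle A_n)\to 0$ for every $g\in V$, while $\mu_2(A_n)(1-\mu_2(A_n))\to\tfrac{1}{4}$. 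Thus $(A_n)_n$ is a non-trivial asymptotically invariant sequence and the action is not strongly ergodic; the same computation applies to the Higman--Thompson groups acting on $\{1,\dots,k\}^{\mathbb N}$ with the uniform measure.

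\emph{Non-fullness and factoriality.} Let $E\colon M\to L^\infty(X_2)$ be the canonical conditional expectation, $\varphi_0(f):=\int f\,d\mu_2$, and $\tilde\varphi:=\varphi_0\circ E$; since $g_*\mu_2\sim\mu_2$ for all $g$, this is a faithful normal state on $M$. Put $a_n:=\mathbf{1}_{A_n}-\mu_2(A_n)1\in L^\infty(X_2)\subseteq M$. Then $(a_n)_n$ is bounded and self-adjoint, $\tilde\varphi(a_n)=0$, and $\|a_n\|_{\tilde\varphi}^2=\mu_2(A_n)(1-\mu_2(A_n))\to\tfrac{1}{4}$; it commutes with $L^\infty(X_2)$, and $[a_n,u_g]=(\mathbf{1}_{A_n}-\mathbf{1}_{gA_n})u_g$, whose $\tilde\varphi$-balanced norm is, because the Radon--Nikodym derivative of $g$ is bounded, at most a $g$-dependent multiple of $\mu_2(A_n\triangle gA_n)^{1/2}$ and hence tends to $0$. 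So $(a_n)_n$ is a non-trivial centralising sequence in $M$, and $M$ is not full. For factoriality: the relative commutant $L^\infty(X_2)'\cap M$ is the von Neumann algebra of the isotropy group bundle $\{(g,x):gx=x\}$, so $Z(M)=\bigl(\int^{\oplus}_{X_2}Z(L(V_x))\,d\mu_2\bigr)^{V}$; for $\mu_2$-a.e.\ $x$ the sequence $x$ is not eventually periodic, hence the stabiliser $V_x$ is a directed union of copies of $V$ and is in particular ICC, so $Z(L(V_x))=\C$; ergodicity then forces the invariant part to be $L^\infty(X_2)^{V}=\C$, and $M$ (and each Higman--Thompson analogue) is a factor.

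\emph{Non-amenability, type, and the main obstacle.} To see that $M$ is non-amenable I would realize a non-abelian free group $F$ inside the copy of $V$ consisting of elements supported on a proper clopen set $Y\subsetneq X_2$; since every element of $F$ fixes $X_2\setminus Y$ pointwise, $L^\infty(X_2\setminus Y)\otimes L(F)$ is a corner of the intermediate subalgebra $L^\infty(X_2)\bar{\rtimes}F\subseteq M$, which carries a normal conditional expectation from $M$, so the non-injectivity of $L(F)$ propagates to $M$. Finally, for the type I would compute the modular automorphism group of $\tilde\varphi$: it fixes $L^\infty(X_2)$ and multiplies each $u_g$ by imaginary powers of the Radon--Nikodym cocycle of $g$, all of whose values lie in $2^{\mathbb Z}$, giving $\mathrm{Sp}(\Delta_{\tilde\varphi})\subseteq\{0\}\cup 2^{\mathbb Z}$. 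The matching lower bound $2\in S(M)$ cannot be read off $\Delta_{\tilde\varphi}$ alone, precisely because $M$ is not full; instead one computes the ratio set of $\R$ — the tail equivalence relation of the $2$-shift, whose Radon--Nikodym cocycle takes every value in $2^{\mathbb Z}$ on positive-measure sets — to conclude $S(M)=\{0\}\cup 2^{\mathbb Z}$, i.e.\ $M$ is of type $\mathrm{III}_{1/2}$. The two steps I expect to be genuinely delicate are precisely these last points: converting the elementary strong-ergodicity estimate into a \emph{bona fide} central sequence for a crossed product by a non-singular, non-free action, and pinning the type down to $\mathrm{III}_{1/2}$ rather than merely type $\mathrm{III}$, which (the absence of fullness having blocked the usual route) forces one to argue through the amenable orbit relation while keeping track of the non-trivial isotropy.
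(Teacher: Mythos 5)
Your constructive half is correct and genuinely different from the paper. The displacement bound $|S_n(gx)-S_n(x)|\le 2d(g)$ (valid because each $g\in V$ is a prefix exchange of bounded depth) together with the binomial peak estimate does show that $A_n=\{S_n>n/2\}$ is a non-trivial almost invariant sequence for $V\curvearrowright(X_2,\mu_2)$, and likewise for $V_{d,k}$; the paper instead proves non-strong ergodicity non-constructively, by showing that the equivalence relation $\R_\G$ of an amenable ample groupoid is Borel amenable and invoking Schmidt's theorem, and it explicitly remarks that it does not know a concrete almost invariant sequence, so your explicit $A_n$ is a nice alternative (though it only covers the Thompson-type examples, not the paper's general groupoid theorem). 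Passing from $(A_n)$ to a non-trivial centralizing sequence and hence non-fullness is exactly the paper's remark in its preliminaries. Your non-amenability argument is also different from the paper's (which proves the stronger statement that the action is non-amenable in Zimmer's sense via the Douglas--Nowak/Vaes--Wahl criterion applied to a copy of $V_{d,d}$ supported on a small cylinder) and is fine as stated: the expectation onto $L^{\infty}(X_2)\bar{\rtimes}F$ and the corner $1_{X_2\setminus Y}\,(L^{\infty}(X_2)\bar{\rtimes}F)\,1_{X_2\setminus Y}\cong L^{\infty}(X_2\setminus Y)\bar{\otimes}L(F)$ do propagate non-injectivity of $L(F_2)$ to $M$.

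The genuine gap is the type $\mathrm{III}_{1/2}$ computation (and, to a lesser extent, factoriality). Your upper bound $S(M)\subseteq\{0\}\cup 2^{\mathbb{Z}}$ from $\sigma(\Delta_{\tilde\varphi})$ is fine, but the proposed lower bound -- ``compute the ratio set of the tail equivalence relation'' -- does not conclude: precisely because the action is not free, $M=L^{\infty}(X_2)\bar{\rtimes}V$ is \emph{not} $L(\R_{V\curvearrowright X_2})$ (the paper stresses this point), and Krieger's ratio-set machinery computes $S(L(\R))$, not $S(M)$; nothing in your sketch transports $2\in S(L(\R))$ into $S(M)$ across the non-trivial isotropy, and you acknowledge but do not resolve this. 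Moreover your diagnosis that non-fullness is what blocks reading $S(M)$ off $\Delta_{\tilde\varphi}$ is a misdirection: by Connes, $S(M)=\sigma(\Delta_{\varphi})$ whenever the centralizer $M^{\sigma^{\varphi}}$ is a factor, full or not, and this is exactly how the paper finishes -- it shows $L^{\infty}(X_{d,k})\bar{\rtimes}(V_{d,k})_{\mu_{d,k}}$ is a II$_1$ factor sitting inside the centralizer, that this subalgebra is irreducible in $M$ because $V_{d,k}$ is ICC relative to $(V_{d,k})_{\mu_{d,k}}$, hence the centralizer is a factor and $S(M)=d^{\mathbb{Z}}\cup\{0\}$. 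Your proposal contains no substitute for this step. Relatedly, your factoriality argument rests on the unproved identity $Z(M)=\bigl(\int_{X_2}^{\oplus}Z(L(V_x))\,d\mu_2\bigr)^{V}$ for a non-free non-singular action; this needs justification (measurable fields, and why commutation with the $u_g$ forces fiberwise centrality), whereas the paper deduces factoriality from ergodicity together with $V_{d,k}$ being ICC. So as written, the statement ``factor of type $\mathrm{III}_{1/2}$'' is not established by your argument.
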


To prove this, we show that the action $V \curvearrowright X_2$ is not strongly ergodic.
A non-singular action of a discrete group on a probability space is called strongly ergodic if any almost invariant sequence of measurable sets is trivial (see section \ref{stronglyergodic} for a precise definition).
If the crossed product von Neumann algebra is full, then the action is necessarily strongly ergodic. 
Consequently, if the action is not strongly ergodic, then the associated crossed product is not full.
As the fullness of von Neumann algebras is a strong form of non-amenability, strong ergodicity also reflects non-amenability.
To see that the action $V \curvearrowright X_2$ is not strongly ergodic, we focus on its orbit equivalence relation $\R_{V \curvearrowright X_2}$ and show that it is amenable.
We can derive the amenability of $\R_{V \curvearrowright X_2}$ from the groupoid of germs $\G _2$ of $V \curvearrowright X_2$, which is known to give rise to the Cuntz algebra $\mathcal{O}_2$ and is known to be amenable.

More generally, we treat topological full groups of amenable groupoids.
Since the Thompson groups have remarkable properties, many generalizations of them have been considered.
One of the broad generalizations of the Thompson group $V$ is given by the topological full groups $[[ \mathcal{G} ]]$ of groupoids $\mathcal{G}$.
They act on the unit space $\mathcal{G}^{(0)}$ of the groupoid. 
Topological full groups include the Higman--Thompson groups (\cite{Higman}) and the Brin--Thompson groups (\cite{Brin}).
For instance, there is a groupoid $\mathcal{G}_{d, k}$ such that its topological full group $[[ \mathcal{G}_{d, k} ]]$ is isomorphic to the Higman--Thompson group $V_{d, k}$ and its action $[[ \mathcal{G} _{d, k} ]] \curvearrowright \mathcal{G}^{(0)}_{d, k}$ is also isomorphic to the canonical action $V_{d, k} \curvearrowright X_{d, k}$ of the Higman--Thompson group on the Cantor set (see Example \ref{Cuntzgrpd}).
These groups, the Thompson group $V$, the Higman--Thompson groups, and the Brin--Thompson groups, are known to arise from a topologically principal, amenable, ample groupoid.
From the operator algebraic perspective, groupoids can be considered as a generalization of dynamical systems, and a topological full group of a groupoid is regarded as a realization of the dynamics of the groupoid.
This class is known to provide a rich source of examples of groups and group actions.

We show that not only the canonical action of the Thompson group but also those of topological full groups of topologically principal, amenable, ample groupoids are not strongly ergodic.

\begin{maintheorem}[see Theorem \ref{main}]
Let $\mathcal{G}$ be a topologically principal, amenable, ample groupoid.
Then the canonical action $[[ \mathcal{G} ]] \curvearrowright \mathcal{G}^{(0)}$ is not strongly ergodic with respect to any quasi-invariant probability measure on $\mathcal{G}^{(0)}$. 
\end{maintheorem}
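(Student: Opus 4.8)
The plan is to reduce the problem to amenability of the orbit equivalence relation of the action and then apply the Connes--Feldman--Weiss theorem. Fix a quasi-invariant probability measure $\mu$ on $\mathcal{G}^{(0)}$ and let $\R:=\R_{[[\mathcal{G}]]\curvearrowright\mathcal{G}^{(0)}}$ be the orbit equivalence relation of the canonical action; as $\mathcal{G}$ is a second countable ample groupoid its orbits are countable, so $\R$ is a countable non-singular equivalence relation on $(\mathcal{G}^{(0)},\mu)$. Strong ergodicity of the action is a statement about which sequences of sets are asymptotically invariant under the subgroup $[[\mathcal{G}]]$ of the full group $[\R]$, so it suffices to produce one non-trivial sequence of measurable sets asymptotically invariant under \emph{all} of $[\R]$, i.e. to show that $\R$ itself is not strongly ergodic as an equivalence relation. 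We may also assume the action is ergodic (otherwise it is trivially not strongly ergodic) and that $\mu$ is non-atomic.

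The first main point is that $\R$ is amenable. One should not try to argue via the transformation groupoid of the action, which is typically \emph{not} amenable --- indeed the action $[[\mathcal{G}]]\curvearrowright(\mathcal{G}^{(0)},\mu)$ is in general not amenable in the sense of Zimmer (its point stabilizers can be non-amenable) --- so amenability must be imported from $\mathcal{G}$. Note that $\R\subseteq\R_{\mathcal{G}}$, the orbit equivalence relation of $\mathcal{G}$: each element of $[[\mathcal{G}]]$ is implemented by a compact open bisection of $\mathcal{G}$, so $gx=y$ with $g\in[[\mathcal{G}]]$ forces an arrow of $\mathcal{G}$ from $x$ to $y$. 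By hypothesis $\mathcal{G}$ is topologically amenable, so by the standard passage from topological to measurewise amenability for ample groupoids the measured groupoid $(\mathcal{G},\mu)$ is amenable for our quasi-invariant $\mu$; amenability then passes to the image of $\mathcal{G}$ under the canonical morphism to $\R_{\mathcal{G}}$ given by the range and source maps, so $\R_{\mathcal{G}}$ is an amenable non-singular equivalence relation; and amenability of countable non-singular equivalence relations is inherited by subrelations, so $\R$ is amenable. (When $\mathcal{G}$ is minimal one actually has $\R=\R_{\mathcal{G}}$, but the inclusion is all that is used.)

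Now I would conclude. By the Connes--Feldman--Weiss theorem $\R$ is hyperfinite, say $\R=\bigcup_n\R_n$ for an increasing sequence of finite subrelations $\R_n$. Since $\mu$ is non-atomic and the action ergodic, $\mu$-almost every $\R$-class is infinite, so the $\R_n$-classes grow without bound; choosing in each $\R_n$-class a Borel subset carrying roughly half of its conditional $\mu$-mass yields sets $A_n$ with $\mu(A_n)\to 1/2$ and $\mu(\phi A_n\,\triangle\,A_n)\to 0$ for every $\phi\in\bigcup_m[\R_m]$. As this union is dense in $[\R]$ in the measure topology, a routine approximation upgrades this to $\mu(\phi A_n\,\triangle\,A_n)\to 0$ for all $\phi\in[\R]$. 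Hence $(A_n)$ is a non-trivial asymptotically invariant sequence for $[\R]$, a fortiori for $[[\mathcal{G}]]$, and the action $[[\mathcal{G}]]\curvearrowright\mathcal{G}^{(0)}$ is not strongly ergodic. (In the measure-preserving case this is the classical Rokhlin-tower argument; the non-singular case goes along the same lines after passing to a generating transformation, and is part of the Connes--Feldman--Weiss / Jones--Schmidt circle of results.)

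The main obstacle is the amenability of $\R$, precisely because the action is far from free: one must resist identifying $\R$ with the non-amenable action groupoid and instead chain together the inclusion $\R\subseteq\R_{\mathcal{G}}$, topological-to-measurewise amenability for $\mathcal{G}$, and stability of amenability under the range/source image and under subrelations. A secondary, purely organizational point is that ``amenable implies not strongly ergodic'' is most cleanly used in its equivalence-relation version, which is why it pays to reformulate strong ergodicity via $[\R]$ at the outset and invoke $[[\mathcal{G}]]\subseteq[\R]$ only at the end.
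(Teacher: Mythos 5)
Your proposal is correct and follows essentially the same route as the paper: amenability of the groupoid-induced relation $\R_\G$ (inherited from amenability of $\G$ and passed to the orbits of the action via $[[\G]]\subseteq[\R_\G]$), followed by the standard fact that an amenable, ergodic, non-atomic non-singular equivalence relation admits non-trivial asymptotically invariant sequences --- the paper simply cites Schmidt's Rokhlin-type lemma where you re-derive it from Connes--Feldman--Weiss hyperfiniteness. One small caution on your sketch: the sets $A_n$ should be taken $\R_n$-saturated (unions of whole $\R_n$-classes with total measure about $1/2$), not an arbitrary half of each class, since the latter need not be almost invariant under $[\R_n]$; with saturated sets the almost invariance under each fixed $s\in[[\G]]$ follows directly from $\R=\bigcup_n\R_n$ and non-singularity of $s$, so the delicate ``density in $[\R]$'' upgrade is not even needed.
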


Consequently, the canonical actions of the Thompson group $V$, the Higman--Thompson groups, and the Brin--Thompson groups are not strongly ergodic, and their associated crossed product von Neumann algebras are not full.
This theorem also readily implies that if $\G$ is a topologically principal, amenable, ample groupoid, $\mu$ is a quasi-invariant measure on $\G^{(0)}$, and $\Gamma$ is a subgroup of $[[ \G ]]$, then the restricted action $\Gamma \curvearrowright (\G^{(0)}, \mu)$ is not strongly ergodic either.
For example, this shows that for any embedding $\iota$ of the free group $F_m$ $(m \geq 2)$ into $[[ \G ]]$, the restricted action $\iota (F_m) \curvearrowright (\G^{(0)}, \mu)$ is not isomorphic to the Bernoulli action $F_m \curvearrowright (\{ 0, 1 \} , \frac{1}{2}\delta _0 + \frac{1}{2}\delta _1)^{F_m}$.

In the proof of the latter theorem, we show that an equivalence relation associated with an amenable groupoid is amenable.
Since our proof is not constructive, we do not know a concrete description of a non-trivial almost invariant sequence, even for the Thompson group $V$.

\subsection*{Acknowledgements}
The author is deeply grateful to Yusuke Isono for his invaluable comments and discussions.
He would also like to thank Paul Jolissaint for bringing his attention to \cite{Jolissaint, Jolissaint2}.
This work was supported by JSPS KAKENHI Grant Number JP26K16998.

\section{Preliminaries}
\subsection{Groupoids and topological full groups}
A groupoid $\G$ is a set with a subset $\G^{(0)} \subset \G$, called the unit space of $\G$, source and range maps $s, r \colon \G \to \G^{(0)}$, and a multiplication
\[
\G^{(2)} := \{ (g, h) \in \G \times \G \mid s(g) = r(h) \} \ni (g, h) \mapsto gh \in \G
\]
such that 
\begin{itemize}
\item for all $x \in \G^{(0)}, s(x) = x = r(x)$,
\item for all $g \in \G, gs(g) = g = r(g)g$,
\item for all $(g, h) \in \G^{(2)}, s(gh)=s(h)\text{ and }r(gh)=r(g)$,
\item for $(g, h), (h, k) \in \G^{(2)}, (gh)k = g(hk)$, and
\item for all $g \in \G$, there exists $g^{-1} \in \G$ such that $g^{-1} g = s(g)$ and $g g^{-1} = r(g).$ 
\end{itemize}
Let $\G _x := s^{-1}(x)$, $\G ^x := r^{-1}(x)$, and $\G _x^x := \G _x \cap \G ^x = s^{-1}(x) \cap r^{-1}(x)$ for $x \in \G ^{(0)}$.

A topological groupoid is a groupoid equipped with a topology for which maps $(g, h) \mapsto gh$ and $g \mapsto g^{-1}$ are continuous.
In this article, all groupoids are assumed to be locally compact, Hausdorff, and with a compact unit space.
A topological groupoid $\G$ is said to be minimal if $\{ r(g) \in \G \mid g \in \G _x \} \subset \G^{(0)}$ is dense for all $x \in \G^{(0)}$.
A topological groupoid $\G$ is said to be topologically principal if the set $\{ x \in \G^{(0)} \mid \G _x^x = \{ x \} \}$ is dense in $\G ^{(0)}$.

A topological groupoid is said to be \'{e}tale if the source and range maps are local homeomorphisms.
An \'etale groupoid $\G$ is said to be ample if its unit space $\G^{(0)}$ is totally disconnected.
A compact open subset $U$ of an \'etale groupoid $\G$ is said to be a full bisection if $s|_U \colon U \to \G^{(0)}$ and $r|_U \colon U \to \G^{(0)}$ are homeomorphisms.

Let $\G$ be an ample groupoid.
The collection $[[ \G ]]$ of full bisections of $\G$ forms a group under the operations $UV := \{ uv \in \G \mid u \in U, v \in V \}$ and $U^{-1} := \{ u^{-1} \in \G \mid u \in U \}$.
This group is called the topological full group of $\G$.
For a topologically principal, ample groupoid $\G$, $[[ \G ]]$ faithfully acts on $\G^{(0)}$ by $U.x := r((s|_U)^{-1}(x))$ for $U \in [[ \G ]]$ and $x \in \G^{(0)}$.

The amenability of locally compact groupoids has been introduced by Renault.
We introduce the amenability of \'etale groupoids as in \cite{DelarocheRenault}.

\begin{definition}[\cite{DelarocheRenault}]
An \'etale groupoid $\G$ is said to be amenable if there is a net $((m_i^x)_{x \in \G^{(0)}})_i$, where $m_i^x \in \Prob{\G^x}$, such that $x \mapsto m_i^x(f)$ is continuous for every $f \in C_c (\G)$ and $\| g.m_i^{s(g)} - m_i^{r(g)} \|_1 \to 0$ uniformly on every compact subset of $\G$.
\end{definition}

Note that amenability of \'etale groupoids is equivalent to nuclearity of its reduced groupoid $\mathrm{C}^*$-algebra (see \cite[Theorem 5.6.18]{BrownOzawa}).

\begin{example}\label{Cuntzgrpd}
Let $d, k \geq 2$ be integers.
For a positive integer $n$, put $[n] = \{ 1 , \ldots , n \}$.
Let $X _d := [d]^{\mathbb{N}}$ and $X_{d, k} := [k] \times [d]^{\mathbb{N}}$ be the Cantor sets.
For $\mu \in [k] \times [d]^* := [k] \cup \bigcup_{p=1}^\infty [k] \times [d]^p$ and $\omega \in X_d$, $|\mu|$ denotes a length of $\mu$, $\mu \omega \in X_{d, k}$ denotes their concatenation, and $\mu X_d := \{ \mu \omega \mid \omega \in X_d \} \subset X_{d, k}$.

Let 
\[
\G _{d, k} := \{ (\nu \omega , |\nu| - |\mu|, \mu \omega) \in X_{d, k} \times \mathbb{Z} \times X_{d, k} \mid \mu, \nu \in [k] \times [d]^* , \omega \in X_d \}
\]
be a groupoid with structure maps
\begin{gather*}
s(\nu \omega , |\nu| - |\mu|, \mu \omega) = \mu \omega, \quad r(\nu \omega , |\nu| - |\mu|, \mu \omega) = \nu \omega, \\
(\nu \omega , |\nu| - |\mu|, \mu \omega)^{-1} = (\mu \omega, |\mu| - |\nu|, \nu \omega) \\
(\mu _3 \omega, |\mu_3|-|\mu_2|, \mu _2 \omega) (\mu _2 \omega, |\mu _2| - |\mu _1|, \mu _1 \omega) = (\mu _3 \omega, |\mu_3|-|\mu_1|, \mu _1 \omega).
\end{gather*}
This becomes an ample, topologically principal groupoid with a topology generated by $\{ (\nu \omega, | \nu | - | \mu |, \mu \omega) \in \G_{d, k} \mid \omega \in X_d \}$, where $\mu, \nu \in [k] \times [d]^*$.
Moreover, this groupoid is amenable since its reduced groupoid $\mathrm{C}^*$-algebra $\mathrm{C}^*_r(\G _{d, k})$ is isomorphic to a nuclear $\mathrm{C}^*$-algebra $\mathbb{M}_k (\mathbb{C}) \otimes \mathcal{O}_d$, where $\mathcal{O}_d$ is the Cuntz algebra of degree $d$.

The topological full group $[[ \G_{d, k} ]]$ of this groupoid $\G_{d, k}$ is known to be isomorphic to the Higman--Thompson group $V_{d, k}$ (see \cite[Remark 6.3 and Section 6.7.1]{Matui}).
Here, the Higman--Thompson group $V_{d, k}$ is a collection of homeomorphisms of $X_{d, k}$ of the following form: 
let $\mu _1, \ldots , \mu _n, \nu _1 , \ldots , \nu _n \in [k] \times [d]^*$ be finite words such that $\bigsqcup _{i=1}^n \mu _i X_d = X_{d, k} = \bigsqcup _{i=1}^n \nu _i X_d$.
Then $X_{d, k} \ni \mu _i \omega \mapsto \nu _i \omega \in X_{d, k}$ defines a homeomorphism of $X_{d, k}$.
This homeomorphism is denoted by 
$
\left(
\begin{smallmatrix} 
\mu_1 & \cdots & \mu _n \\
\nu _1 & \cdots & \nu _n 
\end{smallmatrix}
\right)
$.
The isomorphism is given by
\[
V_{d, k} \ni 
\left(
\begin{smallmatrix} 
\mu_1 & \cdots & \mu _n \\
\nu _1 & \cdots & \nu _n 
\end{smallmatrix}
\right)
\mapsto \bigsqcup _{i=1}^n \{ (\nu _i \omega , |\nu _i| - |\mu _i|, \mu _i \omega ) \mid \omega \in X_d \} \in [[ \G _{d, k} ]].
\]
\end{example}

\begin{example}\label{BrinThompson}
Using the similar notations as the above example, let
\[
\mathcal{G}_2 := \{ (\nu \omega , |\nu| - |\mu|, \mu \omega) \in X_2 \times \mathbb{Z} \times X_2 \mid \mu, \nu \in [2]^* , \omega \in X_2 \}.
\]
Then $\G _2^m := \overbrace{\G _2 \times \cdots \times \G _2}^{m}$ is an ample, topologically principal groupoid.
This groupoid is also amenable since $\mathrm{C}_r^*(\G_2^m)$ is isomorphic to $\mathcal{O}_2^{\otimes m}$.
It is well known that its topological full group $[[ \G _2^m ]]$ is isomorphic to the Brin--Thompson group $mV$.

\end{example}

\subsection{Equivalence relations}
An equivalence relation $\R$ on a set $X$ is a principal groupoid whose unit space is $X$, i.e., a groupoid with $\R _x \cap \R ^x = \{ x \}$. 
Let $(X, \mu)$ be the standard probability space.
A Borel equivalence relation $\R$ on $X$ is an equivalence relation on $X$ such that $\R \subset X \times X$ is a Borel subset.
Let $[x]_{\R}$ denote the equivalence class of $\R$ containing $x \in X$.
In this article, we assume that all equivalence classes of an equivalence relation are countable.
The equivalence relation $\R$ is said to be non-singular for $\mu$ if for each $E \subset X$ with $\mu (E)=0$, one has $\mu ( [E]_{\R})=0$, where $[E]_{\R} := \bigcup _{x \in E} [x]_{\R}$.
The full group $[ \R ]$ of $\R$ is a collection of Borel automorphisms $\phi \colon X \to X$ such that $\{ (\phi (x), x ) \in X \times X \mid x \in X \} \subset \R$.

\begin{example}
Assume that a discrete group $\Gamma$ acts on the standard Borel space $X$ by Borel automorphisms.
Its orbit equivalence relation $\R_{\Gamma \curvearrowright X}$ is an equivalence relation defined by $\{ (sx, x) \in X \times X \mid s \in \Gamma, x \in X \}$.
\end{example}

\begin{example}
Let $\G$ be an ample groupoid.
The associated equivalence relation $\R_\G$ is an equivalence relation on $\G^{(0)}$ defined by $\R_\G := \{ (r(g), s(g)) \in \G ^{(0)} \times \G^{(0)} \mid g \in \G \}$.

The groupoid $\G_{d, k}$ in Example \ref{Cuntzgrpd} gives the tail equivalence relation on $[k] \times [d]^{\mathbb{N}}$, i.e., for $x=(x_i)_{i=1}^{\infty}, y=(y_i)_{i=1}^{\infty} \in [k] \times [d]^{\mathbb{N}}$, $(x, y) \in \R_{\G_{d, k}}$ if there exist $p, q \in \mathbb{N}$ such that $x_{p+i} = y_{q+i}$ for all $i \geq 1$.
\end{example}

\begin{comment}
\begin{remark}
It is not clear whether $\R_\G = \R ([[ \G ]] \curvearrowright \G^{(0)})$.
\end{remark}
\end{comment}

The measure-theoretic notion of amenability for equivalence relations is introduced by Zimmer (\cite{Zimmer}), and an equivalent condition is given by Connes, Feldman, and Weiss (\cite{ConnesFeldmanWeiss}).
The Borel-theoretic notion of amenability for equivalence relations is introduced by Jackson, Kechris, and Louveau (\cite{JacksonKechrisLouveau}).

\begin{definition}[see \cite{Zimmer, ConnesFeldmanWeiss, JacksonKechrisLouveau}]
A countable Borel equivalence relation $\R$ on the standard probability space $(X, \mu)$ is said to be (Borel) amenable if there is a net $((p_i^x)_{x \in X})_i$, where $p_i^x \in \Prob \R^x$ such that $\R \ni (y, x) \mapsto p_i^y(y, x)$ is measurable and $\| \gamma p_i^x - p_i^y \| _1 \to 0$ for all $\gamma = (y, x) \in \R$, where $\gamma p_i^x (y, z):= p_i^x(x, z)$.
It is said to be $\mu$-amenable if there is a $\mu$-conull Borel subset $A \subset X$ such that $\R |_A$ is amenable.
\end{definition}

Note that a subequivalence relation of a $\mu$-amenable equivalence relation is $\mu$-amenable (see \cite{ConnesFeldmanWeiss} and \cite[Proposition 5.1]{Moore}).

With a non-singular equivalence relation $\R$ on a standard probability space $(X, \mu)$, one can associate a von Neumann algebra $L(\R)$.
This associated von Neumann algebra $L(\R)$ is generated by unitaries $\{ u_\phi \mid \phi \in [ \R ] \}$ and $L^{\infty}(X)$ with relations $u_{\phi} f u_{\phi}^* = f \circ \phi^{-1}$ for $\phi \in [\R]$ and $f \in L^{\infty}(X)$.
For an accurate definition of von Neumann algebras associated with equivalence relations, see, for example, \cite{FeldmanMoore}.
Note that $\R$ is $\mu$-amenable if and only if $L(\R)$ is amenable.

\begin{comment}
We review the definition of von Neumann algebras associated with equivalence relations.
Let $\R$ be a non-singular equivalence relation on a standard probability space $(X, \mu)$.
The full group $[ \R ]$ of $\R$ is a collection of Borel automorphisms $\phi \colon X \to X$ such that $\{ (\phi (x), x ) \in X \times X \mid x \in X \} \subset \R$.
The right counting measure $\mu _r$ on $\R$ is defined as follows:
\[
\mu _r (A) := \int _X |\R_y \cap A| \, d\mu(y)
\]
for a Borel set $A \subset \R$.
For each $\phi \in [\R]$, the associated unitary $u_\phi \in \mathbb{B}(L^2(\R, \mu_r))$ is defined by $[u_\phi \xi](x, y) := \xi (\phi^{-1}(x), y)$ for $\xi \in L^2 (\R, \mu _r)$ and $(x, y) \in \R$.
We also define a multiplication operator $M_f$ for $f \in L^\infty (X, \mu)$ by $[M_f \xi](x, y) := f(x) \xi (x, y)$ for $\xi \in L^2 (\R, \mu _r)$ and $(x, y) \in \R$.
The von Neumann algebra $L(\R)$ associated with $\R$ is a von Neumann algebra generated by $\{ M_f \mid f \in L^{\infty}(X, \mu) \}$ and $\{ u_{\phi} \mid \phi \in [ \R ] \}$ in $\mathbb{B}(L^2(\R, \mu _r))$.
\end{comment}

\subsection{Strongly ergodic actions and full factors}\label{stronglyergodic}
Let $\Gamma \curvearrowright (X, \mu)$ be a non-singular action of a discrete group $\Gamma$ on a standard probability space $(X, \mu)$.
The action $\Gamma \curvearrowright (X, \mu)$ is said to be strongly ergodic if for any sequence $(A_n)_{n \in \mathbb{N}}$ of measurable subsets of $X$ such that $\lim _n \mu (A_n \bigtriangleup s A_n) = 0$ for all $s \in \Gamma$, we have $\lim_n \mu (A_n) (1- \mu (A_n)) = 0$.
Note that any amenable action is never strongly ergodic.

Next, we review the fullness of factors.
Fullness of factors has been introduced by Connes.
He defines fullness by several equivalent conditions.
Here, we introduce fullness by using centralizing sequences.
For a von Neumann algebra $M$, $x \in M$ and $\varphi \in M_*$, define $x \varphi, \varphi x \in M_*$ by $[x \varphi](y) := \varphi (yx)$ and $[\varphi x] (y) := \varphi (xy)$.

\begin{definition}[\cite{Connes}]
A factor $M$ with separable predual is called full if for any bounded sequence $(x_n)_n$ of elements of $M$ such that $\| x_n \varphi - \varphi x_n \| \to 0$ for any $\varphi \in M_*$, there exists a bounded sequence $(z_n)_n$ of elements of $\mathbb{C}$ such that $x_n - z_n \to 0$ in the $*$-strong topology.
\end{definition}

If the action $\Gamma \curvearrowright (X, \mu)$ is not strongly ergodic, then the crossed product $L^{\infty} (X) \bar{\rtimes} \Gamma$ is never full.
Indeed, if $(A_n)_n$ is a non-trivial almost invariant sequence, i.e., $\lim _n \mu (A_n \bigtriangleup s A_n) = 0$ for all $s \in \Gamma$ and $\limsup_n \mu (A_n) (1- \mu (A_n)) \neq 0$, then $(1_{A_n})_n \subset L^{\infty}(X, \mu) \bar{\rtimes} \Gamma$ is a non-trivial centralizing sequence.
In some cases, strong ergodicity is one of the sufficient conditions for fullness of the crossed product, e.g., \cite{Choda, HoudayerIsono}.

\section{Main result}
\begin{lemma}\label{Lemma}
Let $\G$ be an amenable \'etale groupoid.
Then the associated equivalence relation $\R _\G$ is Borel amenable.
\end{lemma}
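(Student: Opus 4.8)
The plan is to unwind the definition of amenability of an étale groupoid $\G$ and push the data forward to the equivalence relation $\R_\G$ on $X := \G^{(0)}$. Recall that amenability of $\G$ furnishes a net $((m_i^x)_{x \in X})_i$ with $m_i^x \in \Prob{\G^x}$, continuous in $x$ when tested against $C_c(\G)$, and satisfying $\| g.m_i^{s(g)} - m_i^{r(g)} \|_1 \to 0$ uniformly on compacta. First I would fix the canonical surjection $\pi \colon \G \to \R_\G$, $g \mapsto (r(g), s(g))$, which restricts to a bijection $X \ni x \mapsto (x,x)$ on units and, for each $x$, maps $\G^x$ onto $\R_\G^x = \R^x$. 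The natural candidate for the approximating net on $\R_\G$ is the pushforward $p_i^x := \pi_* m_i^x \in \Prob{\R^x}$, i.e. $p_i^x(y,x) := m_i^x(\G^x \cap \G^y) = m_i^x\big((r|_{\G^x})^{-1}(y)\big)$; since every $\G^x \cap \G^y = \G_x^y$ is, by the étale/discreteness hypothesis, at most countable (indeed $\G$ restricted to each fiber is discrete), this is a genuine probability measure on the countable set $\R^x$.

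The key steps, in order, are: (1) verify that $\R_\G$ is a countable Borel equivalence relation — local compactness plus étaleness gives that $s,r$ are local homeomorphisms, so fibers are discrete hence countable, and $\R_\G$ is an analytic, in fact Borel, subset of $X \times X$ with countable classes; (2) check the measurability of $(y,x) \mapsto p_i^y(y,x)$: this follows from the continuity of $x \mapsto m_i^x(f)$ for $f \in C_c(\G)$ by approximating the indicator of a compact open bisection, and patching over a countable cover of $\G$ by such bisections (using ampleness, or in general a partition-of-unity argument over $C_c(\G)$); (3) establish the asymptotic invariance: for $\gamma = (y,x) \in \R_\G$ pick any $g \in \G$ with $\pi(g) = \gamma$, i.e. $s(g) = x$, $r(g) = y$, and observe that left translation by $g$ intertwines $\pi|_{\G^x} \colon \G^x \to \R^x$ with $\pi|_{\G^y} \colon \G^y \to \R^y$ in the sense that $\pi(g h) = \gamma \cdot \pi(h)$ where $\gamma \cdot (z, x) := (z, y)$ wait — more carefully, $g$ maps $\G^x$ to $\G^y$ by $h \mapsto gh$, and under $\pi$ this corresponds to the reindexing $(z,x) \mapsto (z,y)$ on the groupoid-of-pairs side after relabelling the base point; since pushforward commutes with this bijection, $\gamma p_i^x = \pi_*(g.m_i^x)$ and $p_i^y = \pi_* m_i^y$, so $\| \gamma p_i^x - p_i^y \|_1 \le \| g.m_i^{s(g)} - m_i^{r(g)} \|_1 \to 0$ (the pushforward is an $\ell^1$-contraction, and here it is an isometry on the relevant measures). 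This gives exactly the defining net for Borel amenability of $\R_\G$.

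The main obstacle I anticipate is the measurability claim in step (2): the amenability net for $\G$ is only controlled through $C_c(\G)$ and pointwise-in-$x$ continuity, whereas Borel amenability of $\R_\G$ asks for Borel measurability of $(y,x) \mapsto p_i^y(y,x)$ on all of $\R_\G$. One has to argue that $\pi_* m_i^x$ depends measurably on $x$; this is cleanest when $\G$ is ample (the Thompson-group case), since then $\G$ has a countable basis of compact open bisections $\{U_n\}$, each $1_{U_n} \in C_c(\G)$, and $p_i^x(r(g_n^x), x) = m_i^x(1_{U_n})$ where $g_n^x = (s|_{U_n})^{-1}(x)$ — a Borel function of $x$ — so the whole family is Borel. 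In the general étale case one replaces this with a countable set of functions in $C_c(\G)$ separating points of each fiber and uses that a finite measure on a countable discrete set is determined by, and depends measurably on, these evaluations. A second, minor point to be careful about is that Borel amenability is a priori stronger than $\mu$-amenability, but since we are producing an everywhere-defined Borel net we get the stronger conclusion directly, with no need to discard a null set. I would also note that one may alternatively deduce this from the equivalence of groupoid amenability with nuclearity of $C^*_r(\G)$ together with the fact that $C^*_r(\R_\G)$ is a quotient (or a corner of a related algebra), but the direct pushforward argument above is more transparent and self-contained.
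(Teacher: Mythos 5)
Your proposal is correct and follows essentially the same route as the paper: push the approximately invariant system $(m_i^x)$ forward along $g \mapsto (r(g),s(g))$ to get $p_i^x$ on $\R_\G^x$, and bound $\| \gamma p_i^x - p_i^y \|_1$ by $\| g.m_i^{s(g)} - m_i^{r(g)} \|_1$ via the $\ell^1$-contractivity of the pushforward, which is exactly the paper's coordinate computation. Your extra attention to measurability (via compact open bisections in the ample case) goes beyond what the paper records, and the only blemishes are harmless index slips, e.g.\ the fiber of arrows from $y$ to $x$ should be written $\G^x \cap \G_y$ (equivalently $(s|_{\G^x})^{-1}(y)$), not $\G^x \cap \G^y$ or $(r|_{\G^x})^{-1}(y)$.
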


\begin{proof}
Let $(m_i^x)_{x \in \G^{(0)}}$ be a net of a family of maps as in the definition of amenability.
Define $\displaystyle p_i^x (x, y) := \sum _{s(g)=y, r(g)=x} m_i^x(g)$ for $x \in \G^{(0)}$, then $p_i^x \in \Prob \R_\G^x$ since
\begin{align*}
\sum_{y \in \G^{(0)}; (x, y) \in \R _\G} p_i^x (x, y) &= \sum _{y \in \G^{(0)}; (x, y) \in \R _\G} \sum_{s(g)=y, r(g)=x} m_i^x(g) \\
&= \sum _{g \in \G ^x} m_i^x (g) \\
&=1.
\end{align*}
We will show that $\| \gamma p_i^y - p_i^x \|_1 \to 0$ for $\gamma = (x, y) \in \R_\G$.
Fix $g \in \G$ with $s(g) = y, r(g) = x$.
Then one has
\begin{align*}
\gamma p_i^y (x, z) &= p_i^y (y, z) \\
&= \sum_{s(h)=z, r(h)=y} m_i^y(h) \\
&= \sum_{s(k)=z, r(k)=r(g)} gm_i^{s(g)}(k).
\end{align*}
One also has $\displaystyle p_i^x(x,z) = \sum_{s(k)=z, r(k)=r(g)} m_i^{r(g)}(k)$ and we have
\begin{align*}
\| \gamma p_i^y - p_i^x \|_1 &= \sum_{(x, z) \in \R_\G^x} \left| \sum_{s(h)=z, r(h)=x} gm_i^{s(g)}(h) - m_i^{r(g)}(h) \right| \\
& \leq \| gm_i^{s(g)} - m_i^{r(g)} \|_1 \\
&\to 0
\end{align*}
Therefore, $\R_\G$ is amenable.
\end{proof}

\begin{theorem}\label{main}
Let $\G$ be a topologically principal, amenable, ample groupoid.
Then the canonical action $[[ \G ]] \curvearrowright \G^{(0)}$ is not strongly ergodic with respect to any quasi-invariant probability measure $\mu$ on $\G^{(0)}$.
\end{theorem}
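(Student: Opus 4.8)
The plan is to reduce strong ergodicity of the canonical action to a property of its orbit equivalence relation, which is contained in $\R_\G$, and then to use that $\R_\G$ is amenable even though the action itself need not be amenable in the sense of Zimmer.

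Fix a quasi-invariant probability measure $\mu$ on $\G^{(0)}$; quasi-invariance for the groupoid means exactly that $\R_\G$ is a non-singular countable Borel equivalence relation on $(\G^{(0)},\mu)$. Each full bisection $U\in[[\G]]$ induces the Borel automorphism $x\mapsto U.x=r((s|_U)^{-1}(x))$ of $\G^{(0)}$, and since $(U.x,x)=(r(g),s(g))$ for $g=(s|_U)^{-1}(x)\in\G$, this automorphism lies in the full group $[\R_\G]$; in particular $\R_{[[\G]]\curvearrowright\G^{(0)}}\subseteq\R_\G$. Hence it suffices to exhibit a single sequence $(A_n)_n$ of Borel subsets of $\G^{(0)}$ with $\limsup_n\mu(A_n)(1-\mu(A_n))>0$ and $\mu(A_n\bigtriangleup\phi(A_n))\to0$ for every $\phi\in[\R_\G]$: specializing to the $\phi$ coming from elements of $[[\G]]$ then shows the action is not strongly ergodic. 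If $[[\G]]\curvearrowright(\G^{(0)},\mu)$ is not ergodic, a Borel invariant set of $\mu$-measure strictly between $0$ and $1$ gives such a constant sequence, so we may assume ergodicity; as every $[[\G]]$-orbit is contained in an $\R_\G$-class, $\R_\G$ is then ergodic too.

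By the Lemma above, $\R_\G$ is Borel amenable, hence $\mu$-amenable (restrict the witnessing Borel net), equivalently $L(\R_\G)$ is amenable. By the Connes--Feldman--Weiss theorem in its non-singular form, $\R_\G$ is therefore hyperfinite after deleting a $\mu$-null set: $\R_\G=\bigcup_m\R_m$ for an increasing sequence of finite Borel subequivalence relations $\R_m$. We treat the case where $\mu$ is non-atomic, which is the one relevant to the Thompson groups; then the image of $\mu$ on $\G^{(0)}/\R_m$ is non-atomic, so for each $m$ we may pick a Borel set $A_m$ that is a union of $\R_m$-classes with $\mu(A_m)=\tfrac12$. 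Given $\phi\in[\R_\G]$ and $\varepsilon>0$, from $(\phi^{\pm1}(x),x)\in\R_\G=\bigcup_m\R_m$ for all $x$ together with continuity of $\mu$ from above we obtain $M$ with $\mu(\{x:(\phi(x),x)\in\R_M\text{ and }(\phi^{-1}(x),x)\in\R_M\})>1-\varepsilon$; for $m\ge M$ the set $A_m$ is $\R_m$-saturated and hence $\R_M$-saturated, so $x\in A_m\iff\phi(x)\in A_m$ off this exceptional set, whence $\mu(A_m\bigtriangleup\phi(A_m))<\varepsilon$. Thus $\mu(A_m\bigtriangleup\phi(A_m))\to0$ for every $\phi\in[\R_\G]$ while $\mu(A_m)(1-\mu(A_m))=\tfrac14$ for all $m$, which is the desired non-trivial almost invariant sequence.

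The heart of the argument is the passage from the abstract amenability of $\R_\G$ to the explicit sequence $(A_m)$: this is precisely where the non-singular Connes--Feldman--Weiss hyperfiniteness enters, and the delicate point is to arrange that a \emph{single} sequence works simultaneously for all $\phi\in[\R_\G]$, in particular for all of $[[\G]]$. The remaining issue is the behaviour for atomic $\mu$, where $\R_\G$ can carry a type $\mathrm{I}$ part; since the canonical measures on the Cantor set for the Thompson group $V$, the Higman--Thompson groups and the Brin--Thompson groups are non-atomic, this does not affect those applications.
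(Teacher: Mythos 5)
Your argument is correct and shares the paper's skeleton: you observe that every element of $[[\G]]$ acts as an element of the full group $[\R_\G]$, and you produce a single sequence of sets that is almost invariant under all of $[\R_\G]$ and non-trivial, using the amenability of $\R_\G$ from the Lemma. Where you differ is in how that sequence is produced. The paper simply quotes Schmidt's Rokhlin-type result (\cite[Proposition 2.2]{Schmidt}) to pass from $\mu$-amenability of $\R_\G$ to a non-trivial sequence that is asymptotically invariant under the whole full group, whereas you prove this step by hand: Connes--Feldman--Weiss gives $\R_\G=\bigcup_m\R_m$ with $\R_m$ finite, you take $\R_m$-saturated sets $A_m$ of measure $\tfrac12$ (possible because finite subrelations of a non-atomic space have non-atomic quotients), and your exceptional-set estimate, using both $(\phi(x),x)$ and $(\phi^{-1}(x),x)$, correctly avoids any appeal to uniform control of Radon--Nikodym derivatives. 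What this buys is a self-contained proof that makes explicit exactly what is needed: non-atomicity of $\mu$ (and not ergodicity --- your reduction to the ergodic case is in fact never used afterwards and could be dropped); the paper's one-line citation is shorter but hides these hypotheses. Your caveat about atomic $\mu$ is not a defect relative to the paper: Schmidt's proposition is likewise a statement about non-atomic spaces, and indeed the theorem as literally stated needs such a restriction, since a quasi-invariant measure concentrated on a single infinite orbit (which exists already for $V\curvearrowright X_2$) makes the action strongly ergodic --- any almost invariant sequence must eventually agree, atom by atom, with an invariant set. So your proof covers the same ground as the paper's, with the cited black box replaced by an explicit construction.
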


\begin{proof}
Since the equivalence relation $\R_\G$ associated to $\G$ is $\mu$-amenable by Lemma \ref{Lemma}, by Rokhlin's lemma (\cite[Proposition 2.2]{Schmidt}), there is a sequence $(A_n)$ of measurable subsets of $\mathcal{G}^{(0)}$ such that $\mu (A_n \triangle s A_n) \to 0$ for all $s \in [\R]$ and $\mu (A_n) (1-\mu (A_n)) \not\to 0$.
Since $[[ \G ]] < [ \R_\G ]$, this $(A_n)$ is a non-trivial almost invariant sequence for $[[ \G ]] \curvearrowright (\G^{(0)}, \mu)$.
Therefore, $[[ \G ]] \curvearrowright (\G^{(0)}, \mu)$ is not strongly ergodic.
\begin{comment}
Using the fact that $[[ \G ]] < [ \R _\G ]$, one has $(L^{\infty}(X, \mu)^\omega)^{[[ \G ]]} \supset L^{\infty}(X, \mu) ^\omega \cap L (\R_\G)' $.
Thus, we have $(L^{\infty}(X, \mu)^\omega)^{[[ \G ]]} \neq \mathbb{C}$ and this shows that the action $[[ \G ]] \curvearrowright \G^{(0)}$ is not strongly ergodic.
\end{comment}
\end{proof}

As a corollary, we obtain a non-embedding result.
Note that there are many ways to embed free groups into $[[ \G ]]$.
For instance, it is not hard to see that there is an embedding $\iota \colon F_2 \hookrightarrow V$ such that the restricted action $\iota (F_2) \curvearrowright \{ 1, 2 \} ^{\mathbb{N}}$ is (topologically) isomorphic to the action $F_2 \curvearrowright \partial F_2$ of the free group on its Gromov boundary.

\begin{corollary}
Let $\G$ be a topologically principal, amenable, ample groupoid, $\mu$ a quasi-invariant measure on $\G^{(0)}$, and $\Gamma \curvearrowright (X, \nu)$ a strongly ergodic action on a standard probability space.
Then there is no embedding $\iota \colon \Gamma \hookrightarrow [[ \G ]]$ such that the restricted action $\iota (\Gamma) \curvearrowright (\G^{(0)}, \mu )$ is isomorphic to $\Gamma \curvearrowright (X, \nu)$.
\end{corollary}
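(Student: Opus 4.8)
The plan is to deduce this directly from Theorem \ref{main} by contradiction. Suppose that, for some $\G$, $\mu$ and $\Gamma \curvearrowright (X,\nu)$ as in the statement, there is nonetheless an embedding $\iota \colon \Gamma \hookrightarrow [[ \G ]]$ for which the restricted non-singular action $\iota(\Gamma) \curvearrowright (\G^{(0)}, \mu)$ is isomorphic to $\Gamma \curvearrowright (X, \nu)$. By Theorem \ref{main}, the action $[[ \G ]] \curvearrowright (\G^{(0)}, \mu)$ is not strongly ergodic, so I may fix a sequence $(A_n)_n$ of measurable subsets of $\G^{(0)}$ with $\mu(A_n \triangle s A_n) \to 0$ for every $s \in [[ \G ]]$ and $\mu(A_n)(1 - \mu(A_n)) \not\to 0$.

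The first step is the passage to a subgroup: since $\iota(\Gamma) \subset [[ \G ]]$, the convergence $\mu(A_n \triangle s A_n) \to 0$ holds a fortiori for every $s \in \iota(\Gamma)$, while $\mu(A_n)(1-\mu(A_n)) \not\to 0$ is unchanged. Hence $(A_n)_n$ is a non-trivial almost invariant sequence for $\iota(\Gamma) \curvearrowright (\G^{(0)},\mu)$, so this restricted action is not strongly ergodic. This is exactly the inheritance of non-strong-ergodicity by subgroups already observed in the introduction.

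The second step is to push this conclusion through the given isomorphism. Strong ergodicity of a non-singular action is an invariant of its isomorphism class---it can be phrased purely in terms of the measure algebra $L^\infty(\G^{(0)},\mu)$ together with its induced $\Gamma$-action, for instance via its ultrapower characterisation---so it would follow that $\Gamma \curvearrowright (X,\nu)$ is not strongly ergodic, contradicting the hypothesis. This contradiction proves the corollary.

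I do not expect any real obstacle, since essentially all the content sits in Theorem \ref{main}. The one point deserving a little care is the meaning of ``isomorphic'' in the second step: it must be read in the non-singular (measure-class) sense, which is the convention of Section \ref{stronglyergodic} and is in any case forced because $[[ \G ]]$, hence $\iota(\Gamma)$, acts only quasi-invariantly with respect to $\mu$. With that reading one may either cite the standard fact that strong ergodicity is transported by such isomorphisms, or---if one prefers to stay elementary---transport the sequence $(A_n)_n$ across a measure-space isomorphism realising the conjugacy and check the two defining conditions of a non-trivial almost invariant sequence by hand.
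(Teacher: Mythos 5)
Your argument is correct and is exactly the one the paper intends: the paper states this corollary without proof as an immediate consequence of Theorem \ref{main}, relying precisely on the two observations you make — that a non-trivial almost invariant sequence for $[[\G]]$ remains one for any subgroup $\iota(\Gamma)$, and that strong ergodicity is an invariant of isomorphism of non-singular actions.
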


In particular, for any embedding $\iota \colon F_m \hookrightarrow [[ \G ]]$, the restricted action $\iota (F_m) \curvearrowright (\G^{(0)}, \mu)$ is not isomorphic to the Bernoulli action $F_m \curvearrowright (\{ 0, 1 \} , \frac{1}{2}\delta _0 + \frac{1}{2}\delta _1)^{F_m}$, which is known to be strongly ergodic. 

Finally, we investigate the canonical action $V_{d, k} \curvearrowright X_{d, k}$ of the Higman--Thompson group on the Cantor set and the associated crossed product von Neumann algebra.
We will use the following theorem to see the non-amenability of the action.

\begin{theorem}[{\cite[Theorem 7]{DouglasNowak}, \cite[Proposition 5.3]{VaesWahl}}] \label{non-amenable}
Let $\Gamma$ be a countable group and $\Gamma \curvearrowright (X, \mu)$ be its non-singular action on a standard probability space.
Denote by $\omega \colon \Gamma \times X \to (0, + \infty)$ the Radon--Nikodym cocycle: $\displaystyle \omega (s, x) := \frac{ds^{-1}\mu}{d\mu}(x)$.
Assume that there exists a finite subset $F \subset \Gamma$ such that 
\[
\sum _{s \in F} \int _X \sqrt{\omega(s, x)} \, d\mu(x) > \left\| \sum _{s \in F} \lambda _s \right\|,
\]
where $\lambda \colon \Gamma \to \mathbb{B}(\ell ^2 (\Gamma))$ denotes the left regular representation.
Then the action $\Gamma \curvearrowright (X, \mu)$ is non-amenable in the sense of Zimmer.
\end{theorem}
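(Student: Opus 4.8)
The plan is to argue by contraposition: assuming $\Gamma \curvearrowright (X,\mu)$ is amenable in the sense of Zimmer, I will show that $\sum_{s\in F}\int_X \sqrt{\omega(s,x)}\,d\mu(x) \le \big\| \sum_{s\in F}\lambda_s\big\|$ for every finite $F \subseteq \Gamma$; the contrapositive is exactly the assertion of the theorem. The starting point is the standard reformulation of Zimmer amenability via the transformation groupoid $\Gamma \ltimes X$: there is a net $(\xi_i)_i$ of measurable maps $\xi_i \colon X \to \Prob{(\Gamma)}$ such that, for every $s \in \Gamma$,
\[
\int_X \big\| \lambda_s\, \xi_i(s^{-1}x) - \xi_i(x) \big\|_1 \, d\mu(x) \longrightarrow 0 ,
\]
where $\lambda_s$ denotes left translation on $\ell^1(\Gamma)$. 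It is crucial that no Radon--Nikodym factor appears in this condition, since probability vectors are $\ell^1$-normalized and translation is an $\ell^1$-isometry; pinning down this reformulation in its correct form is the first thing to get right.

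Next I would pass to square roots. Set $\zeta_i(x) := \xi_i(x)^{1/2} \in \ell^2(\Gamma)$ and regard $\eta_i := (\zeta_i(x))_{x\in X}$ as a unit vector in $\mathcal H := L^2(X,\mu) \otimes \ell^2(\Gamma) = L^2(X,\mu;\ell^2\Gamma)$. Let $\kappa$ be the Koopman representation, $(\kappa_s f)(x) = \omega(s,x)^{1/2} f(s^{-1}x)$, and let $U := \kappa \otimes \lambda$ be the unitary representation on $\mathcal H$ given by $(U_s \zeta)(x) = \omega(s,x)^{1/2}\,\lambda_s\big[\zeta(s^{-1}x)\big]$. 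Since $\lambda_s$ commutes with entrywise square roots, and using the elementary inequality $\|a^{1/2}-b^{1/2}\|_2^2 \le \|a-b\|_1$ for $a,b \in \Prob{(\Gamma)}$, one computes
\[
\langle U_s \eta_i, \eta_i\rangle = \int_X \omega(s,x)^{1/2}\, \big\langle (\lambda_s\, \xi_i(s^{-1}x))^{1/2},\, \xi_i(x)^{1/2}\big\rangle \, d\mu(x) ,
\]
where the inner integrand lies in $[0,1]$ and, by the displayed convergence above, tends to $1$ in $L^1(\mu)$.

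The crucial step is to conclude that $\langle U_s \eta_i, \eta_i\rangle \to \int_X \sqrt{\omega(s,x)}\,d\mu(x)$, and this is where I expect the main obstacle: the multiplier $\omega(s,\cdot)^{1/2}$ is only integrable --- indeed $\int_X \omega(s,x)^{1/2}\,d\mu(x) \le \big(\int_X \omega(s,x)\,d\mu(x)\big)^{1/2} = 1$ by Cauchy--Schwarz --- and need not be bounded, so the $L^1$-limit cannot simply be passed through it. The remedy is a uniform-integrability argument: split the integral according to $\{\omega(s,\cdot)^{1/2} \le M\}$ and its complement, let $i \to \infty$ with $M$ fixed, and then let $M \to \infty$ using $\omega(s,\cdot)^{1/2} \in L^1(\mu)$. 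Granting this and summing over $s \in F$ gives $\big\langle \big(\sum_{s\in F} U_s\big)\eta_i, \eta_i\big\rangle \to \sum_{s\in F}\int_X \sqrt{\omega(s,x)}\,d\mu(x)$, hence $\big\|\sum_{s\in F} U_s\big\| \ge \sum_{s\in F}\int_X \sqrt{\omega(s,x)}\,d\mu(x)$. Finally, Fell's absorption principle provides a unitary conjugating $\kappa_s \otimes \lambda_s$ to $1 \otimes \lambda_s$ for all $s$, so $\big\|\sum_{s\in F} U_s\big\| = \big\|\sum_{s\in F}\lambda_s\big\|$; combining the two yields the desired inequality. As a sanity check, when $\mu$ is $\Gamma$-invariant one has $\omega \equiv 1$ and the conclusion reduces to the classical fact that a non-amenable group admits no amenable probability-measure-preserving action.
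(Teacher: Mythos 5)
Your argument is correct, but note that the paper does not prove this theorem at all: it is imported verbatim from the literature (\cite[Theorem 7]{DouglasNowak}, \cite[Proposition 5.3]{VaesWahl}), so there is no internal proof to compare against. What you have written is essentially a reconstruction of the standard proof from those references: Zimmer amenability is traded for a net of approximately equivariant maps $\xi_i \colon X \to \operatorname{Prob}(\Gamma)$ (this reformulation, via measurewise amenability of the transformation groupoid $X \rtimes \Gamma$, is itself a theorem of Anantharaman-Delaroche--Renault type and should be cited rather than treated as a definition, but it is standard); taking pointwise square roots produces almost $U$-invariant-type unit vectors for $U = \kappa \otimes \lambda$, the Powers--St{\o}rmer inequality $\|a^{1/2}-b^{1/2}\|_2^2 \le \|a-b\|_1$ gives convergence of the fiberwise inner products to $1$, and Fell absorption identifies $\big\|\sum_{s\in F} \kappa_s \otimes \lambda_s\big\|$ with $\big\|\sum_{s\in F}\lambda_s\big\|$. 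You correctly isolate and resolve the only analytic subtlety, namely that $\omega(s,\cdot)^{1/2}$ is merely in $L^1(\mu)$, via the truncation/uniform-integrability argument; and the cocycle-convention issue ($ds\mu/d\mu$ versus $ds^{-1}\mu/d\mu$ in the Koopman formula) is harmless since the Hellinger integral $\int \sqrt{\omega(s,x)}\,d\mu(x)$ is unchanged under $s \leftrightarrow s^{-1}$. A slightly more compressed packaging of the same idea, closer to how Vaes--Wahl phrase it, is to say that amenability forces the Koopman representation $\kappa$ to be weakly contained in $\lambda$, and then evaluate $\sum_{s \in F}\kappa_s$ at the constant vector $1_X$; your vectors $\eta_i$ are exactly the witnesses for that weak containment, so the two formulations carry the same content.
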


Let $\mu_{d, k}$ be the probability measure on $X_{d, k} = [k] \times [d]^{\mathbb{N}}$ defined by $\mu_{d, k} := \left( \frac{1}{k} \sum_{i=1}^k \delta _i \right) \otimes \left( \frac{1}{d} \sum_{i=1}^d \delta _i \right)^{\mathbb{N}}$.

\begin{proposition}\label{HigmanThompsonalgebra}
Let $V_{d, k}$ be the Higman--Thompson group, $V_{d, k} \curvearrowright (X_{d, k}, \mu_{d, k})$ be the canonical action. 
Then the von Neumann algebra $L^{\infty}(X_{d, k}, \mu_{d, k}) \bar{\rtimes} V_{d, k}$ is a non-amenable, non-full factor of type $\mathrm{III}_{1/d}$.
\end{proposition}

\begin{proof}
First, we show that $L^{\infty}(X_{d, k}) \bar{\rtimes} V_{d, k}$ is a factor.
Let $(V_{d, k})_{\mu_{d, k}}$ be the subgroup of $V_{d, k}$ consisting of elements of $V_{d, k}$ which preserve $\mu_{d, k}$.
We will show that the restricted action $(V_{d, k})_{\mu_{d, k}} \curvearrowright (X_{d, k}, \mu_{d, k})$ is ergodic (cf. \cite{Jolissaint2}).
Let $A \subset X_{d, k}$ be a $(V_{d, k})_{\mu_{d, k}}$-invariant measurable subset with positive measure.
We claim that the value $\displaystyle c(\nu) := \frac{\mu_{d, k} (A \cap \nu X_d)}{\mu_{d, k} (\nu X_d)}$ is independent of $\nu \in [k] \times [d]^*$.
If $| \nu _1 | = | \nu _2|$, then there exists $s \in (V_{d, k})_{\mu_{d, k}}$ such that $s(\nu_1 X_d) = \nu_2 X_d$ and it follows that $\mu_{d, k} (A \cap \nu X_d)$ depends only on $| \nu |$.
Thus, for each $\nu \in [k] \times [d]^*$, 
\[
dc(\nu) \mu_{d, k} (\nu 1 X_d) = c(\nu) \mu_{d, k}(\nu X_d) = \mu_{d, k} (A \cap \nu X_d) = d \mu_{d, k} (A \cap \nu 1 X_d).
\]
Hence, one has $c(\nu) = c(\nu 1)$, and $c( \nu )$ is independent of $\nu$.
Let this value be $c$.
Then, for any disjoint union $B$ of finitely many cylinder sets, $\displaystyle \frac{\mu_{d, k}(A \cap B)}{\mu_{d, k} (B)} = c$.
Since the set of Borel sets which can be approximated by a disjoint union of finitely many cylinder sets is a $\sigma$-algebra of whole Borel sets, $\displaystyle \frac{\mu_{d, k}(A \cap B)}{\mu_{d, k} (B)} = c$ for every Borel subset $B$ of $X_{d, k}$.
Hence, we have $\displaystyle \mu _{d, k} (A) = c = \frac{\mu_{d, k} (A)}{\mu _{d, k} (A)} = 1$.

It is not hard to see that $V_{d, k}$ is ICC relative to $(V_{d, k})_{\mu_{d, k}}$, i.e., $\{ tst^{-1} \mid t \in (V_{d, k})_{\mu_{d, k}} \}$ is infinite for all $s \in V_{d, k} \setminus \{ e \}$.
Hence, the inclusion $L^{\infty}(X_{d, k}) \bar{\rtimes} (V_{d, k})_{\mu_{d, k}} \subset L^{\infty}(X_{d, k}) \bar{\rtimes} V_{d, k}$ is irreducible, i.e., $(L^{\infty}(X_{d, k}) \bar{\rtimes} (V_{d, k})_{\mu_{d, k}})' \cap L^{\infty}(X_{d, k}) \bar{\rtimes} V_{d, k} = \mathbb{C}$.
In particular, $L^{\infty}(X_{d, k}) \bar{\rtimes} V_{d, k}$ is a factor.

We next show that the crossed product $L^{\infty}(X_{d, k}, \mu_{d, k}) \bar{\rtimes} V_{d, k}$ is non-amenable.
By \cite{Zimmer2}, if a non-singular action $\Gamma \curvearrowright (X, \mu)$ is non-amenable in the sense of Zimmer, then the crossed product $L^{\infty}(X) \bar{\rtimes} \Gamma$ is non-amenable.
Thus, it suffices to show that the action $V_{d, k} \curvearrowright (X_{d, k}, \mu_{d, k})$ is non-amenable.
Since the Higman--Thompson group $V_{d, d}$ is non-amenable, there exists a finite subset $F \subset V_{d, d}$ such that $\displaystyle \| \sum _{s \in F} \lambda _s \| < |F|$ (see \cite[Theorem 2.6.8]{BrownOzawa}).
Take an $n \in \mathbb{N}$ such that $\displaystyle |F| \left( 1 - \frac{1}{kd^{n-1}} \right) > \| \sum _{s \in F} \lambda _s \|$.
Let $\nu = 1 \cdots 1 \in [k] \times [d]^{n-1}$ be a finite word consisting of $n$ consecutive 1s.
Consider an embedding $V_{d, d}$ into $V_{d, k}$ in such a way that $V_{d, d}$ acts canonically on $\nu X_d \cong X_d$  and trivially on $X_{d, k} \setminus \nu X_d$.
Note that for each $s \in V_{d, d} \subset V_{d, k}$ and $x \in X_{d, k} \setminus \nu X_d$, $\omega (s , x) =1$.
Thus, we have
\[
\sum _{s \in F} \int _{X_{d, k}} \sqrt{\omega (s, x)} \, d\mu_{d, k}(x) > \sum _{s \in F} \mu_{d, k} (X_{d, k} \setminus \nu X_d) = |F| \left( 1 - \frac{1}{kd^{n-1}} \right) > \| \sum _{s \in F} \lambda _s \|.
\]
Therefore, the action $V_{d, k} \curvearrowright (X_{d, k}, \mu_{d, k})$ is non-amenable by Theorem \ref{non-amenable}.
Note that this argument also applies to the topological full group of an essentially principal, ample, purely infinite, and minimal groupoid $\mathcal{G}$, hence the action $[[ \mathcal{G} ]] \curvearrowright \mathcal{G}^{(0)}$ is non-amenable.
For an embedding of $V$ into $[[ \mathcal{G} ]]$, see \cite[Proposition 3.7]{GardellaTanner}.

By Theorem \ref{main}, $L^{\infty} (X_{d, k}) \bar{\rtimes} V_{d, k}$ is not full.

Finally, we compute the type of $L^{\infty} (X_{d, k}) \bar{\rtimes} V_{d, k}$ (cf. \cite[Proposition 2.7]{Morando}).
Observe that the modular operator $\Delta_\varphi$ on $L^2 (X_{d, k}) \otimes \ell^2 (V_{d, k})$ of the canonical faithful state $\varphi (f \lambda _s) := \delta _{s, e} \mu_{d, k} (f)$ is given by $\displaystyle \Delta_\varphi (\xi \otimes \delta _s) := \left( \frac{ds \mu_{d, k}}{d \mu_{d, k}} \right)^{-1} \xi \otimes \delta _s$ and the modular automorphism group $(\sigma _t^{\varphi})_t$ on $L^{\infty}(X_{d, k}) \bar{\rtimes} V_{d, k}$ of $\varphi$ is given by $\displaystyle \sigma _t^{\varphi}(f \lambda_s) = \left( \frac{ds\mu_{d, k}}{d\mu_{d, k}} \right)^{-it} f \lambda _s$. 
Thus, we have 
\[
L^{\infty} (X_{d, k}) \bar{\rtimes} (V_{d, k})_{\mu _{d, k}} \subset (L^{\infty} (X_{d, k}) \bar{\rtimes} (V_{d, k}))^{\sigma ^\varphi} \subset L^{\infty} (X_{d, k}) \bar{\rtimes} V_{d, k},
\]
where $(L^{\infty} (X_{d, k}) \bar{\rtimes} (V_{d, k}))^{\sigma ^\varphi} := \{ x \in L^{\infty} (X_{d, k}) \bar{\rtimes} (V_{d, k}) \mid \sigma_t^{\varphi}(x) = x \text{ for all } t \in \mathbb{R} \}$, and $(L^{\infty} (X_{d, k}) \bar{\rtimes} (V_{d, k}))^{\sigma ^\varphi}$ is a factor.
Hence, by \cite[Corollary 3.2.7]{ConnesSinv}, Connes' $S$-invariant $S(L^{\infty} (X_{d, k}) \bar{\rtimes} V_{d, k})$ is $\sigma (\Delta _\varphi) = d^{\mathbb{Z}} \cup \{ 0 \}$ and thus $L^{\infty} (X_{d, k}) \bar{\rtimes} V_{d, k}$ is of type $\mathrm{III}_{1/d}$.
\end{proof}

\end{document}